\newtheorem{theorem}{Theorem}[section]
\newtheorem{proposition}[theorem]{Proposition}
\newtheorem{corollary}[theorem]{Corollary}
\newtheorem{lemma}[theorem]{Lemma}
\theoremstyle{definition}
\newtheorem{remark}[theorem]{Remark}
\DeclareMathOperator{\mon}{mon}
\DeclareMathOperator{\ct}{Cot}
\DeclareMathOperator{\re}{Re}
\title{Bohr's absolute convergence problem for $\mathcal{H}_p$-Dirichlet series in Banach spaces}
\author{Daniel Carando\footnote{Departamento de Matematica. Universidad de Buenos Aires. 1428 Buenos Aires and IMAS - CONICET (Argentina)}
\and
Andreas Defant\footnote{Institut f\"ur Mathematik. Universit\"at Oldenburg. D-26111 Oldenburg (Germany) }
\and
Pablo Sevilla-Peris  \footnote{Instituto Universitario de Matem\'atica Pura y Aplicada. Universitat Polit\`ecnica de Val\`encia. 46022 Valencia (Spain)}}
\date{}
\begin{document}
\maketitle

\footnotetext{The first author was partially supported by CONICET PIP 0624, PICT 2011-1456 and UBACyT 1-746. The second and third authors
were supported by MICINN Project MTM2011-22417}

\footnotetext{Mathematics Subject Classification (2010): 30B50, 32A05, 46G20.}
\footnotetext{Keywords: Vector valued Dirichlet series, vector valued $H_p$ spaces, Banach spaces.}

\begin{abstract}
The Bohr-Bohnenblust-Hille Theorem states that the width of the strip in the complex plane on which an ordinary  Dirichlet series
$\sum_n a_n n^{-s}$ converges uniformly but not absolutely is less than or equal to $1/2$, and this estimate is optimal. Equivalently,
the supremum of the absolute convergence abscissas of all Dirichlet series in the Hardy space $\mathcal{H}_\infty$  equals $1/2$. By a surprising fact of  Bayart the same result holds true if $\mathcal{H}_\infty$ is replaced by any Hardy space $\mathcal{H}_p$, $1 \le p < \infty$, of Dirichlet series. For Dirichlet series with coefficients in a Banach space $X$ the maximal width of Bohr's strips depend on the geometry of $X$; Defant, Garc\'ia, Maestre and P\'erez-Garc\'ia proved that such maximal width  equal $1- 1/\ct(X)$, where $\ct(X)$ denotes the maximal cotype of $X$. Equivalently, the supremum over the absolute convergence abscissas of all Dirichlet series in the vector-valued Hardy space $\mathcal{H}_\infty(X)$  equals $1- 1/\ct(X)$. In this article we show that this
result remains true if  $\mathcal{H}_\infty(X)$ is replaced by the larger class $\mathcal{H}_p(X)$, $1 \le p < \infty$.

\end{abstract}

\section{Main result and its motivation }

Given a Banach space $X$, an ordinary Dirichlet series in $X$  is a series of the form $D=\sum_n a_n n^{-s}$, where the coefficients $a_n$ are vectors in $X$ and $s$ is a complex variable. Maximal domains
where such Dirichlet series converge conditionally, uniformly or absolutely are half planes $[\re  > \sigma]$, where $\sigma=\sigma_c,\sigma_u$ or $\sigma_a$ are called the abscissa of conditional, uniform or absolute convergence, respectively. More precisely,
$\sigma_\alpha (D)$ is the infimum of all $r\in\mathbb{R}$ such that  on $[\re  >r]$ we have convergence of $D$ of the requested type $\alpha = c,u$ or $a$. Clearly, we have
$\sigma_c (D) \le \sigma_u (D) \le \sigma_a (D)$,
and it can be easily shown that $\sup \sigma_a (D) - \sigma_c (D) =1$\,,
where the supremum is taken over all Dirichlet series $D$ with coefficients in $X$. To determine the
maximal width of the strip on which a Dirichlet series in $X$ converges uniformly but not absolutely,
is more complicated. The main result of \cite{DeGaMaPG08} states, with the notation given below, that
\begin{equation} \label{MathAnn}
S(X):= \sup \sigma_a (D) - \sigma_u (D) = 1 - \frac{1}{\ct(X)}\,.
\end{equation}
Recall that a Banach space $X$ is of cotype $q$, $2 \le q < \infty$ whenever there is a constant $C \ge 0$
such that for each choice of finitely many vectors $x_1, \ldots, x_N \in X$ we have
\begin{align}\label{cotype}
\Big(\sum_{k=1}^N \big\| x_k \big\|_X^q\Big)^{1/q}  \leq C \Big( \int_{\mathbb{T}^N} \Big\| \sum_{k=1}^N  x_k z_k  \Big\|_X^2 dz \Big)^{1/2}\,,
\end{align}
where $\mathbb{T}:= \big\{z \in \mathbb{C}\,\big|\, |z|=1\big\}$  and $\mathbb{T}^N$ is endowed with $N$th product of the normalized Lebesgue measure on $\mathbb{T}$. We denote by $C_r(X)$ the best of such constants $C$. As usual we write
\[
\ct(X) := \inf \Big\{2 \le q< \infty \,\big|\, X \,\,\, \text{cotype} \,\,\, q \Big\}\,,
\]
and (although this infimum in general is not attained) we call it the optimal cotype of $X$.
{If there is no $2\le q <\infty $ for which $X$ has cotype $q$,
then $X$ is said to have no finite cotype, and we put $\ct(X)= \infty$. }
To see an example,
\[
\ct(X)(\ell_q)
  =
\begin{cases}
  q & \text{ for } 2 \leq q \leq \infty \\
  2 & \text{ for } 1\le  q \le 2\,.
\end{cases}
\]
The scalar case $X= \mathbb{C}$ in \eqref{MathAnn} was first studied by Bohr and Bohnenblust-Hille: In 1913 Bohr in \cite{Bo13_Goett} proved that $S(\mathbb{C}) \leq \frac{1}{2}$, and in 1931 Bohnenblust and Hille
in \cite{BoHi31}  that $S(\mathbb{C}) \ge \frac{1}{2}$. Clearly, the equality
\begin{equation} \label{BBH}
S(\mathbb{C}) = \frac{1}{2}\,,
\end{equation}
 nowadays called   {\it Bohr-Bohnenblust-Hille Theorem},
fits with \eqref{MathAnn}.
Let us give a second formulation of \eqref{MathAnn}. Define the vector space   $\mathcal{H}_\infty(X)$
 of all Dirichlet series $D = \sum_n a_n n^{-s}$ in $X$ such that
 \begin{itemize}
 \item $\sigma_c(D) \leq 0$\,,
  \item the function $D(s)= \sum_n a_n \frac{1}{n^s}$ on $ \re s >0$ is bounded.
  \end{itemize}
  Then $\mathcal{H}_\infty(X)$ together with the norm
\[
\|D\|_{\mathcal{H}_\infty(X)} =\sup_{\re s > 0}  \Big\|\sum_{n=1}^\infty a_n \frac{1}{n^s} \Big\|_X
\]
forms a Banach space. For any Dirichlet series $D$ in $X$ we have
\begin{equation} \label{BohrFUND}
\sigma_u(D) = \inf \Big\{ \sigma \in \mathbb{R} \,\,\, \big|\, \,\, \sum_n \frac{a_n}{n^\sigma}\frac{1}{n^s} \in \mathcal{H}_\infty(X) \Big\} \,.
\end{equation}
In the scalar case $X=\mathbb{C}$, this is (what we call) {\it Bohr's fundamental theorem}  from \cite{Bo13}, and for Dirichlet series in arbitrary
Banach spaces the proof follows  similarly. Together with \eqref{BohrFUND} a simply translation argument  gives the following reformulation of \eqref{MathAnn}:
\begin{equation} \label{MathANN}
S(X)=   \sup_{D \in \mathcal{H}_\infty(X)}\sigma_a(D) =   1 - \frac{1}{\ct(X)}\,.
\end{equation}

Following an ingenious idea of Bohr each Dirichlet series may be identified with a power series in infinitely many variables. More presicely, fix a Banach space $X$ and denote by $\mathfrak{P}(X)$  the vector space  of all formal power series
$\sum_{\alpha} c_{\alpha} z^{\alpha}$ in $X$ and by $\mathfrak{D}(X)$ the vector space  of all Dirichlet series $\sum_n a_{n} n^{-s}$ in $X$ . Let as usual  $(p_{n})_{n}$ be the sequence of prime numbers. Since each integer $n$ has a unique prime number decomposition
$n=p_{1}^{\alpha_{1}} \cdots p_{k}^{\alpha_{k}} = p^{\alpha}$ with $\alpha_j \in \mathbb{N}_0, \, 1 \le j \leq k$, the linear mapping
\begin{align} \label{vision}
\mathfrak{B}_X: \mathfrak{P}(X) & \longrightarrow \mathfrak{D}(X) \,\,, \,\,\,\,\,\,
\textstyle\sum_{\alpha \in \mathbb{N}_{0}^{(\mathbb{N})}} c_{\alpha} z^{\alpha}
\rightsquigarrow \textstyle\sum_{n=1}^{\infty} a_{n} n^{-s}\,, \,\,\, \text{where}\,\,\, a_{p^{\alpha}} = c_{\alpha}\,
\end{align}
is bijective; we call $\mathfrak{B}_X$ the {\it Bohr transform in $X$}. As discovered by Bayart in \cite{Ba02} this (\textit{a priori} very) formal identification  allows to develop a theory of Hardy spaces of scalar--valued  Dirichlet series.

Similarly we now define  Hardy spaces of $X$--valued  Dirichlet series.
Denote by $dw$ the normalized Lebesgue measure on the  infinite dimensional polytorus $\mathbb{T}^{\infty} = \prod_{k=1}^\infty \mathbb{T}$, e.g. the countable product measure  of the normalized Lebesgue measure
on $\mathbb{T}$. For any  multi index $\alpha = (\alpha_1, \dots, \alpha_n,0, \ldots ) \in \mathbb{Z}^{(\mathbb{N})}$ (all finite sequences in $\mathbb{Z}$)
the $\alpha$th Fourier coefficient $\hat{f}(\alpha)$ of
  $f  \in L_{1} (\mathbb{T}^{\infty}, X)$ is given by
\[
\hat{f}(\alpha) = \int_{\mathbb{T}^{\infty}} f(w) w^{- \alpha} dw\,,
\]
 where we as usual write $w^{\alpha}$ for the monomial $w_{1}^{\alpha_{1}}\ldots w_n^{\alpha_n}$.
Then, given $1 \le p < \infty$, the $X$-valued Hardy space on $\mathbb{T}^\infty$ is the subspace of $L_{p} (\mathbb{T}^{\infty}, X)$ defined as
\begin{align}
H_{p}(\mathbb{T}^{\infty},X) = \Big\{ f \in  L_{p} (\mathbb{T}^{\infty},X) \,\,\big|\,\, \hat{f}(\alpha) = 0 \,  , \, \,\, \,
\forall \alpha \in \mathbb{Z}^{(\mathbb{N})} \setminus \mathbb{N}_{0}^{(\mathbb{N})} \Big\}.
\end{align}
Assigning to each $f \in H_{p}(\mathbb{T}^{\infty},X)$ its unique formal power series $\sum_\alpha \hat{f}(\alpha) z^\alpha$ we may consider $H_{p}(\mathbb{T}^{\infty},X)$ as a subspace of $\mathfrak{P}(X)$. We denote the  image
of this subspace under the Bohr transform $\mathfrak{B}_X$ by
\[
\mathcal{H}_p(X)\,.
\]
This vector space of all (so-called) $\mathcal{H}_p(X)$-Dirichlet series $D$  together with the norm
$$\|D\|_{\mathcal{H}_p(X)}= \|\mathfrak{B}_X^{-1}(D)\|_{H_{p}(\mathbb{T}^{\infty},X)}$$
forms a Banach space; in other words, through Bohr's transform $\mathfrak{B}_X$ from \eqref{vision} we  by definition  identify
\[
\mathcal{H}_p(X) = H_{p}(\mathbb{T}^{\infty},X)\,, 1 \le p < \infty.
\]
For $p = \infty$  we this way of course could also define a Banach space $\mathcal{H}_\infty(X)$, and it turns out that at least in the scalar case  $X= \mathbb{C}$ this definition then coincides with the one given above; but we remark
that these two $\mathcal{H}_\infty(X)$'s are different for arbitrary $X$.
It is important to note that by the Birkhoff-Khinchine ergodic theorem  the  following internal description of the $\mathcal{H}_p(X)$-norm for finite Dirichlet polynomials
$D= \sum_{k=1}^n a_k n^{-s}$ holds:
\[
\|D\|_{\mathcal{H}_p(X)}=\lim_{T \rightarrow \infty}  \Big(\frac{1}{2T}\int_{-T}^T \Big\|\sum_{k=1}^n a_k \frac{1}{n^{t}}\Big\|_X^p dt  \Big)^{1/p}
\]
(see e.g. Bayart \cite{Ba02} for the scalar case, and the vector-valued case follows exactly the same way).

Motivated by  \eqref{BohrFUND} we define for  $D\in \mathfrak{D}(X)$ and $1 \le p <\infty$
\[
\sigma_{\mathcal{H}_p(X)}(D): =
\inf \Big\{ \sigma \in \mathbb{R} \,\, \big| \,\, \sum_n \frac{a_n}{n^\sigma}\frac{1}{n^s} \in \mathcal{H}_p(X) \Big\}\,,
\]
and motivated by \eqref{MathANN} we define
\[
S_{p}(X):= \sup_{D \in \mathfrak{D}(X)} \sigma_a (D) - \sigma_{\mathcal{H}_p(X)} (D) = \sup_{D \in \mathcal{H}_p(X)}\sigma_a(D) \,
\]
 (for the second equality use again a simple  translation argument). A result of Bayart \cite{Ba02} shows that for every $1 \leq p < \infty$
\begin{align}\label{Bayart?}
S_{p}(\mathbb{C}) = \frac{1}{2}\,,
\end{align}
which according to Helson \cite{He05} is a bit surprising since $\mathcal{H}_\infty(\mathbb{C})$
is much smaller than $\mathcal{H}_p(\mathbb{C})$.

The following theorem  unifies and generalizes  \eqref{MathAnn}, \eqref{BBH} as well as \eqref{Bayart?}, and it is our main result.
\begin{theorem}\label{MAIN}
For every $1 \leq p \leq \infty$ and every Banach space $X$ we have
\[
S_p(X)= 1- \dfrac{1}{\ct(X)}\,.
\]
\end{theorem}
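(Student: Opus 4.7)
The case $p = \infty$ is the content of \eqref{MathANN}, so we focus on $1 \le p < \infty$. The lower bound $S_p(X) \ge 1 - 1/\ct(X)$ is the easier direction: for every $D \in \mathcal{H}_\infty(X)$ and every $\sigma' > 0$, the shift $D_{\sigma'}(s)=D(s+\sigma')$ converges uniformly on $\re s \ge 0$, hence is the $\mathcal{H}_\infty(X)$-limit (and, by the ergodic description of $\|\cdot\|_{\mathcal{H}_p(X)}$ on polynomials, also the $\mathcal{H}_p(X)$-limit) of its Dirichlet polynomial partial sums; thus $D_{\sigma'} \in \mathcal{H}_p(X)$, so $\sigma_{\mathcal{H}_p(X)}(D) \le 0$, and \eqref{MathANN} yields $S_p(X) \ge \sup_{D \in \mathcal{H}_\infty(X)}\sigma_a(D) = 1 - 1/\ct(X)$.

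For the reverse inequality, fix $D = \sum_n a_n n^{-s} \in \mathcal{H}_p(X)$ and $\sigma > 1 - 1/\ct(X)$; the target is $\sum_n \|a_n\|_X/n^\sigma < \infty$. Via the Bohr transform, decompose $D = \sum_{m \ge 0} D_m$, where $D_m$ collects those $a_n n^{-s}$ with $n = p^\alpha$, $|\alpha| = m$; on the polytorus this is the $m$-homogeneous chaos of $\mathfrak{B}_X^{-1}(D)$, obtained by the averaging map $f \mapsto \int_\mathbb{T} f(\lambda\,\cdot)\,\lambda^{-m}\,d\lambda$. Since this projection is a contraction on $L_p(\mathbb{T}^\infty,X)$, we have $\|D_m\|_{\mathcal{H}_p(X)} \le \|D\|_{\mathcal{H}_p(X)}$ for every $m$.

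\textbf{Main obstacle.} The technical heart of the proof is a vector-valued Bohnenblust--Hille inequality for $\mathcal{H}_p(X)$,
\[
\Big(\sum_{|\alpha|=m} \|c_\alpha\|_X^{r_m}\Big)^{1/r_m} \le C^m\, \|D_m\|_{\mathcal{H}_p(X)},
\]
with exponents $r_m$ tending to $\ct(X)$ as $m\to\infty$ and a constant $C$ independent of $m$. In the scalar case this is Bayart's inequality, obtained by combining the classical Bohnenblust--Hille inequality for $\mathcal{H}_\infty$ with Weissler's hypercontractivity of the Poisson kernel on the polytorus (trading an $\mathcal{H}_\infty$-norm for an $\mathcal{H}_p$-norm on $m$-homogeneous polynomials). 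In the vector-valued setting the Bohnenblust--Hille inequality for $\mathcal{H}_\infty(X)$ is available from \cite{DeGaMaPG08}, but its extension to $\mathcal{H}_p(X)$ can no longer invoke Poisson hypercontractivity (which requires non-trivial geometry of the target), and must instead be obtained by polarizing $D_m$ into a symmetric $m$-linear form and comparing $L_p$- with $L_q$-moments of the resulting polylinear Steinhaus chaos via vector-valued Kahane-type inequalities, which hold uniformly in $X$ with constants of the form $K^m$. Once this inequality is in hand, a H\"older estimate against zeta-like partial sums together with summation over $m$ --- exactly the argument underlying the proof of \eqref{MathANN} in \cite{DeGaMaPG08} --- yields $\sum_n \|a_n\|_X/n^\sigma < \infty$ for every $\sigma > 1 - 1/\ct(X)$. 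The principal obstacle is precisely this vector-valued Bohnenblust--Hille inequality for $\mathcal{H}_p(X)$: pushing $r_m$ as close to $\ct(X)$ as required, while keeping the exponential constants $C^m$ under control, in the absence of vector-valued hypercontractivity.
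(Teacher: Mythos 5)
Your lower bound and the overall architecture of the upper bound (homogeneous decomposition via the contractive averaging projection $f \mapsto \int_{\mathbb{T}} f(\lambda\,\cdot)\lambda^{-m}d\lambda$, then H\"older against zeta-like sums and summation over $m$) match the paper. But the proof has a genuine gap at exactly the point you flag as the ``main obstacle'': the key coefficient inequality is stated, not proved, and the inequality you propose is moreover not the one the paper uses. The paper does \emph{not} prove a Bohnenblust--Hille-type inequality with exponents $r_m$ tending to $\ct(X)$; it proves (Proposition \ref{maintool}) a \emph{fixed-exponent} cotype estimate: for any $q>\ct(X)$ and any $m$-homogeneous $P:\mathbb{C}^N\to X$ with symmetric $m$-linear form $T$,
\[
\Big(\sum_{i_1,\dots,i_m}\|a_{i_1,\dots,i_m}\|_X^q\Big)^{1/q}\le \big(C_q(X)K\big)^m\frac{m^m}{m!}\int_{\mathbb{T}^N}\|P(z)\|_X\,dz .
\]
This fixed exponent suffices, because H\"older with $q'$ against $u=1/\tilde p^{\,s}$ only needs $sq'>1$, i.e. $s>1-1/q$; the sharper varying exponents of the Bohnenblust--Hille scale are never required, which is precisely how the paper sidesteps the hypercontractivity issue you correctly identify as unavailable in general $X$.

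The missing content is twofold. First, the multilinear estimate: an induction on $m$ using the cotype inequality plus Kahane (the paper's Lemma \ref{lemma0}) bounds the $\ell_q$-norm of the coefficients by the $L_1$-norm of $T$ over $(\mathbb{T}^N)^m$. Second --- and this is the step your sketch does not supply --- one must pass from the $L_1$-norm of $T$ back to the $L_1$-norm of $P$ on the torus. Polarization alone gives $\|T\|\le\frac{m^m}{m!}\sup_{\mathbb{D}^N}\|P\|$, which would only recover the $\mathcal{H}_\infty(X)$ result. The paper's way around this is Lemma \ref{lemma2}: for holomorphic $f:\mathbb{C}^N\to X$,
\[
\int_{\mathbb{T}^N}\cdots\int_{\mathbb{T}^N}\big\|f\big(z^{(1)}+\cdots+z^{(m)}\big)\big\|_X\,dz^{(1)}\cdots dz^{(m)}\le\int_{\mathbb{T}^N}\|f(mz)\|_X\,dz ,
\]
proved by a rotation-invariance argument one variable at a time; applied inside the polarization formula it yields the $\frac{m^m}{m!}\int_{\mathbb{T}^N}\|P\|$ bound above. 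Your proposed alternative --- comparing $L_p$- and $L_q$-moments of the polylinear Steinhaus chaos via vector-valued Kahane-type inequalities --- does not by itself convert the sup-norm produced by polarization into the $L_1$-norm of $P$, nor does it produce exponents $r_m\to\ct(X)$ with controlled constants absent hypercontractivity. Until such an inequality is actually established, the upper bound is not proved. (A minor further point: when $\ct(X)=\infty$ the cotype machinery is unavailable and one needs the separate elementary observation that the coefficients of an $\mathcal{H}_1(X)$-series are bounded by $\|f\|_{L_1(\mathbb{T}^\infty,X)}$, giving $S_1(X)\le 1$ directly.)
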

The proof  will be given in section 4.
But before  we start let us give an interesting reformulation in terms of the monomial convergence of $X$-valued $H_p$-functions on
$\mathbb{T}^{\infty}$.
Fix a Banach space $X$ and $1 \leq p \leq \infty$, and define the set of monomial convergence of
$ H_{p}(\mathbb{T}^{\infty},X)$:
\begin{equation*} \label{mon Hardy}
 \mon H_{p}(\mathbb{T}^{\infty},X) = \Big\{ z \in B_{c_0}\,\,\,\Big|\,\, \, \sum_{\alpha} \| \hat{f}(\alpha) z^{\alpha} \|_X < \infty
\text{ for all }  f \in  H_{p}(\mathbb{T}^{\infty},X) \Big\} \,.
\end{equation*}
Philosophically, this is the  largest set $M$ on which for   each $f \in H_{p}(\mathbb{T}^{\infty},X)$
the definition  $g(z) = \sum_\alpha \hat{f}(\alpha) z^\alpha$, \, $z \in M$ leads to an extension of $f$  from the distinguished boundary $\mathbb{T}^\infty$
to its ``interior'' $B_{c_0}$ (the open unit ball of the Banach space $c_0$ of all null sequences).  For a detailed study of sets of monomial convergence in the scalar case $X= \mathbb{C}$ see \cite{DeMaPr09}, and in the vector-valued case \cite{DeSe11}.

We  later need the following two basic properties of monomial domains (in the scalar case see
 \cite[p.550]{DeGaMaPG08} and \cite[Lemma 4.3]{DeFrMaSe}, and in the vector-valued case the proofs follow similar lines).

\begin{remark} \label{remark}
\text{}
\begin{itemize}
\item[(1)]
Let $z \in \mon H_{p}(\mathbb{T}^{\infty},X)$. Then  $u= (z_{\sigma(n)})_n \in \mon H_{p}(\mathbb{T}^{\infty},X)$
for every permutation $\sigma$ of $\mathbb{N}$.
\item[(2)]
Let $z \in \mon H_{p}(\mathbb{T}^{\infty},X)$  and $x = (x_{n})_{n} \in \mathbb{D}^{\infty}$ be such that $\vert x_{n} \vert \leq \vert z_{n} \vert$ for
all but finitely many $n$'s. Then $x \in  \mon H_{p}(\mathbb{T}^{\infty},X)$.
\end{itemize}
\end{remark}
   Given $1 \le p \leq \infty$ and a Banach space $X$, the following number  measures the size of $\mon H_{p}(\mathbb{T}^{\infty},X)$
within the scale of $\ell_r$-spaces:
\[
M_p(X) = \sup \Big\{ 1 \le r \le \infty \,\, \big| \,\, \ell_r \cap B_{c_0} \subset  \mon H_{p}(\mathbb{T}^{\infty},X)  \Big\}\,.
\]
The following result is a reformulation of Theorem \ref{MAIN} in terms of vector-valued $H_p$-functions on $\mathbb{T}^\infty$ through Bohr's transform $\mathfrak{B}_X$. The proof is modelled along ideas from Bohr's seminal article \cite[Satz IX]{Bo13_Goett}.

\begin{corollary} \label{Bohri}
For each Banach space $X$ and $1 \leq p \leq \infty$ we have
\[
M_p(X) = \frac{\ct(X)}{\ct(X)-1}\,.
\]
\end{corollary}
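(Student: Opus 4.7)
The plan is to transport Theorem \ref{MAIN} through Bohr's correspondence and the prime number theorem. The pivotal observation is the identity
\[
\bigl(p_1^{-\sigma}, p_2^{-\sigma}, \ldots \bigr)^{\alpha}
=
(p^{\alpha})^{-\sigma}
=
n^{-\sigma} \qquad \text{whenever } n = p^{\alpha},
\]
which shows that, for every $\sigma > 0$, monomial summability of a formal power series $\sum_\alpha c_\alpha z^\alpha \in H_p(\mathbb{T}^\infty,X)$ at the reference point $u(\sigma) := (p_n^{-\sigma})_n \in B_{c_0}$ is exactly the absolute convergence of the associated Dirichlet series $D = \mathfrak{B}_X\bigl(\sum_\alpha c_\alpha z^\alpha\bigr) = \sum_n a_n n^{-s}$ on the vertical line $\re s = \sigma$. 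Combining this with Theorem \ref{MAIN} yields two benchmark facts: whenever $\sigma > 1 - 1/\ct(X)$, one has $u(\sigma) \in \mon H_p(\mathbb{T}^\infty, X)$; and whenever $\sigma < 1 - 1/\ct(X)$, there exists $D \in \mathcal{H}_p(X)$ for which the monomial series at $u(\sigma)$ diverges, so $u(\sigma) \notin \mon H_p(\mathbb{T}^\infty, X)$. To move from these benchmark points to arbitrary $\ell_r$-sequences, I would lean on the prime number theorem $p_n \asymp n \log n$ and on both parts of Remark \ref{remark}.

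For the inequality $M_p(X) \ge \ct(X)/(\ct(X)-1)$, fix $r < \ct(X)/(\ct(X)-1)$ and choose $\sigma$ with $1 - 1/\ct(X) < \sigma < 1/r$. Given any $z \in \ell_r \cap B_{c_0}$, Remark \ref{remark}(1) allows to assume $(|z_n|)_n$ nonincreasing, and then $|z_n| \le \|z\|_{\ell_r}\, n^{-1/r}$. Since $\sigma < 1/r$, the prime number theorem gives $|z_n| \le p_n^{-\sigma}$ for every sufficiently large $n$. As $u(\sigma)\in \mon H_p(\mathbb{T}^\infty,X)$, Remark \ref{remark}(2) forces $z \in \mon H_p(\mathbb{T}^\infty, X)$, so $\ell_r \cap B_{c_0} \subset \mon H_p(\mathbb{T}^\infty, X)$.

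For the reverse inequality $M_p(X) \le \ct(X)/(\ct(X)-1)$, fix $r > \ct(X)/(\ct(X)-1)$ and choose $\sigma$ with $1/r < \sigma < 1 - 1/\ct(X)$. The prime number theorem ensures $u(\sigma) \in \ell_r \cap B_{c_0}$ (since $r\sigma > 1$), while the benchmark statement above supplies a Dirichlet series in $\mathcal{H}_p(X)$ witnessing $u(\sigma) \notin \mon H_p(\mathbb{T}^\infty, X)$; hence $\ell_r \cap B_{c_0} \not\subset \mon H_p(\mathbb{T}^\infty, X)$. Taking suprema over admissible $r$ combines the two inequalities into the claimed identity.

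The only technical point is the comparison between the decay rates $n^{-1/r}$ and $p_n^{-\sigma}$, which is a direct consequence of the prime number theorem; everything else is routine once Theorem \ref{MAIN} and Remark \ref{remark} are in hand, so I do not expect any substantial obstacle beyond this quantitative bookkeeping.
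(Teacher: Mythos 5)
Your proposal is correct and follows essentially the same route as the paper: the paper proves the identity $S_p(X)=1/M_p(X)$ and then invokes Theorem \ref{MAIN}, and both directions of that identity rest on exactly your ingredients — the Bohr correspondence at the reference points $(p_n^{-\sigma})_n$, the prime number theorem comparison of $n^{-1/r}$ with $p_n^{-\sigma}$, and the rearrangement/domination properties of Remark \ref{remark}. The only difference is organizational (you substitute the value of $S_p(X)$ from the outset rather than isolating the identity $S_p(X)=1/M_p(X)$ first), so there is nothing substantive to add.
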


\begin{proof}
We are going to prove that  $S_p(X)= 1/M_p(X)$, and as a consequence  the conclusion  follows from Theorem \ref{MAIN}.
We begin by showing that $S_p(X)\leq 1/M_p(X)$. We fix $q < M_p(X)$ and $r > 1/q$; then we have that $\big( \frac{1}{p_{n}^{r}} \big)_{n} \in \ell_{q} \cap  B_{c_0}$
and, by the very definition of $M_p(X)$, $\sum_{\alpha} \big\|\hat{f}(\alpha)  \big( \frac{1}{p^{r}} \big)^{\alpha}\big\|_X<\infty$ converges absolutely for every $f \in H_p(\mathbb{T}^\infty,X)$. We choose
now an arbitrary  Dirichlet series
$$
D=\mathfrak{B}_Xf=  \sum_{n}  a_{n}  \frac{1}{n^{r}}  \in \mathcal{H}_p(X)\,\, \text{  with }\,\, f \in H_p(\mathbb{T}^\infty,X)\,.
$$
Then
\[
 \sum_{n} \big\| a_{n} \big\|_X \frac{1}{n^{r}} = \sum_\alpha  \big\| a_{p^{\alpha}} \big\|_X \Big( \frac{1}{p^{\alpha}} \Big)^{r}
= \sum_\alpha  \big\|\hat{f}(\alpha) \big\|_X \Big( \frac{1}{p^{r}} \Big)^{\alpha} < \infty \, .
\]
Clearly, this implies that $S_p(X) \le r$. Since this holds for each  $r > 1/q$, we get that  $S_p(X) \le 1/q$, and since this now holds for each $q < M_p(X)$, we have  $S_p(X) \leq 1/M_p(X)$.
Conversely, let us take some $q > M_p(X)$; then there is  $z \in \ell_{q} \cap  B_{c_0}$ and  $f \in H_\infty(\mathbb{T}^\infty, X)$ such  that $\sum_{\alpha} \hat{f}(\alpha)z^{\alpha}$ does not converge absolutely. By Remark \ref{remark} we may assume that $z$ is decreasing, and hence  $(z_{n} n^{1/q})_{n}$ is bounded.
We choose now $r>q$ and define $w_{n} = \frac{1}{p_{n}^{1/r}}$. By the Prime Number Theorem we know that there is a universal constant
$C>0$ such that
\[
 0< \frac{z_{n}}{w_{n}} = z_{n} p_{n}^{\frac{1}{r}} = z_{n} n^{\frac{1}{q}}\frac{p_{n}^{1/r}}{n^{1/q}}
 = z_{n} n^{\frac{1}{q}}   \Big( \frac{p_{n}}{n} \Big)^{\frac{1}{r}} \frac{1}{n^{1/q-1/r}}
\leq C z_{n} n^{\frac{1}{q}} \frac{(\log n)^{1/r}}{n^{1/q-1/r}} \, .
\]
The last term tends to $0$ as $n \to \infty$; hence $z_{n} \leq w_{n}$ but for a finite number of $n$'s. By Remark \ref{remark} this implies that $\sum_{\alpha} \hat{f}(\alpha)  w^{\alpha}$
does not converge absolutely. But then   $D=\mathfrak{B}_Xf=  \sum_{n}  a_{n}  n^{-r}  \in \mathcal{H}_p(X)$
satisfies
\[
 \sum_{n} \big\| a_{n} \big\|_X \frac{1}{n^{1/r}} = \sum_{\alpha} \big\|a_{p^{\alpha}} \big\|_X \Big( \frac{1}{p^{1/r}} \Big)^{\alpha}
= \sum_{\alpha}\big\|\hat{f}(\alpha) \big\|_X w^{\alpha} = \infty \, .
\]
This gives that $\sigma_a(D)\geq 1/r$ for every $r>q$, hence $ \sigma_a(D)\geq 1/q$. Since  this holds for every $q>M_p(X)$, we finally have $S_p(X) \geq 1/M_p(X)$.
\end{proof}

We shall use standard notation and notions from Banach space
theory, as presented, e.g. in \cite{LiTz77,LiTz79}.
For everything needed on polynomials in Banach spaces see e.g. \cite{Di99} and \cite{Fl99}.

\section{Relevant  inequalities}
The main aim here is to prove a sort of  polynomial extension of the notion of cotype.
Recall the definition of $C_q(X)$ from \eqref{cotype}.
Moreover, from Kahane's inequality  we know that, given $1 \leq q < \infty$, there is a (best) constant $K \ge 1$
such that for each Banach space $X$ and each choice  finitely many vectors  $x_1, \ldots x_N \in X$
\[
\Big( \int_{\mathbb{T}^N} \Big\| \sum_{k=1}^N  x_k z_k  \Big\|_X^2 dz \Big)^{1/2}
\leq  K \int_{\mathbb{T}^N} \Big\| \sum_{k=1}^N  x_k z_k  \Big\|_X dz \,.
\]
As usual we write $|\alpha|=\alpha_1 + \ldots + \alpha_N$
and $\alpha! = \alpha_1! \ldots \alpha_N!$ for every multi index $\alpha \in \mathbb{N}_0^N$.

\begin{proposition} \label{maintool}
Let $X$ be a Banach space of cotype $q$, $2 \leq q < \infty$, and
\[
P: \mathbb{C}^N \rightarrow X,\,\,\, P(z)= \sum_{\substack{\alpha \in \mathbb{N}_0^N \\ |\alpha|=m}} c_\alpha z^\alpha
\]
be an $m$-homogeneous polynomial. Let
\[
T: \mathbb{C}^N \times \ldots \times  \mathbb{C}^N \rightarrow X,\,\,\,
 T(z^{(1)}, \ldots,z^{(m)}) = \sum_{i_1, \ldots,i_m=1}^N a_{i_1, \ldots,i_m} z^{(1)}_{i_1} \ldots z^{(m)}_{i_m}
\]
be the unique $m$-linear symmetrization of $P$. Then
$$
\Big(
\sum_{i_1,\ldots,i_m}
\big\| a_{i_1, \ldots,i_m} \big\|_X^q
\Big)^{1/q}
\leq
\big(C_q(X)\, K \big)^{m} \frac{m^m}{m!}
\,\, \int_{\mathbb{T}^N} \big\| P(z) \big\|_X dz\,.
$$
\end{proposition}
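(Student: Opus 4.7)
The plan is to combine an iterated-cotype estimate (to pass from the $\ell^q$-norm of the coefficients to the $L^1$-integral of $T$ over the product torus) with an $L^1$-polarization estimate (to pass from $T$ to $P$ with the constant $m^m/m!$).

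\textbf{Step 1 (iterated cotype).} A Fubini computation shows that for $q\ge 2$ the space $\ell^q(Y)$ has cotype $q$ with the same constant as $Y$: apply scalar cotype of $Y$ componentwise, use $\|\cdot\|_{\ell^q(Y)}^q = \sum\|\cdot\|_Y^q$, and invoke Minkowski's integral inequality with exponent $q/2\ge 1$. Iterating, every nested $\ell^q$-space built out of $X$ has cotype $q$ with constant $C_q(X)$. Viewing $(a_{i_1,\ldots,i_m})_{i_1}$ as a family of vectors in the nested $\ell^q_{i_2,\ldots,i_m}(X)$-space and applying its cotype inequality together with Kahane's $L^2$-to-$L^1$ inequality from the statement yields
\[
\Big(\sum_{i_1,\ldots,i_m}\|a_{i_1,\ldots,i_m}\|_X^q\Big)^{1/q} \le C_q(X)\,K \int_{\mathbb{T}^N}\Big(\sum_{i_2,\ldots,i_m}\Big\|\textstyle\sum_{i_1}a_{i_1,\ldots,i_m}z^{(1)}_{i_1}\Big\|_X^q\Big)^{1/q} dz^{(1)}.
\]
The integrand is again a nested $\ell^q$-norm, one index shorter. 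Repeating the argument $m$ times and using Fubini gives
\[
\Big(\sum \|a_{i_1,\ldots,i_m}\|_X^q\Big)^{1/q} \le \bigl(C_q(X)\,K\bigr)^m \int_{(\mathbb{T}^N)^m}\|T(z^{(1)},\ldots,z^{(m)})\|_X\,dz.
\]

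\textbf{Step 2 ($L^1$-polarization).} It remains to prove
\[
\int_{(\mathbb{T}^N)^m}\|T(z^{(1)},\ldots,z^{(m)})\|_X\,dz \le \frac{m^m}{m!}\int_{\mathbb{T}^N}\|P(z)\|_X\,dz.
\]
Starting from the polarization identity $m!\,T(z^{(1)},\ldots,z^{(m)}) = \int_{\mathbb{T}^m} P(\sum u_i z^{(i)})\,\bar u_1\cdots\bar u_m\,du$, taking $\|\cdot\|_X$, integrating over $(\mathbb{T}^N)^m$, and using rotation-invariance of Haar measure (to absorb each $u_i$ into $z^{(i)}$) together with the $m$-homogeneity $P(\sum z^{(i)}) = m^m\,P(\tfrac{1}{m}\sum z^{(i)})$ reduces the task to
\[
\int_{(\mathbb{T}^N)^m}\bigl\|P\bigl(\tfrac{1}{m}(z^{(1)}+\cdots+z^{(m)})\bigr)\bigr\|_X\,dz \le \int_{\mathbb{T}^N}\|P(z)\|_X\,dz.
\]
Here I would invoke the plurisubharmonicity of $\|P\|_X$: the Poisson--Szeg\H{o} representation on the polydisc gives $\|P(w)\|_X \le \int_{\mathbb{T}^N}\|P(z)\|_X\,K_w(z)\,dz$, where $K_w(z)=\prod_k P_{w_k}(z_k)$ is the product of one-dimensional Poisson kernels. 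Setting $W=\frac{1}{m}\sum z^{(i)}$, the coordinates $W_k$ are independent and rotationally symmetric on $\overline{\mathbb{D}}$; since $P_{w_k}(z_k)$ is harmonic in $w_k$ with $P_0(z_k)=1$, the mean-value property forces $E[P_{W_k}(z_k)]=1$, independence then gives $E[K_W(z)]=1$, and the required inequality follows from Fubini.

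Combining Steps 1 and 2 gives the proposition. The main obstacle is the plurisubharmonic averaging step at the end of Step 2: the classical sup-norm polarization $\|T\|_\infty \le \tfrac{m^m}{m!}\|P\|_\infty$ (Harris) is an easy consequence of the polarization identity, but the $L^1$-to-$L^1$ strengthening required here rests on the more delicate verification that pushing Haar measure on $(\mathbb{T}^N)^m$ through $(z^{(1)},\ldots,z^{(m)})\mapsto \tfrac{1}{m}\sum z^{(i)}$ does not inflate the integral of a plurisubharmonic function relative to Haar measure on $\mathbb{T}^N$.
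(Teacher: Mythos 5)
Your proof is correct, and its overall architecture --- a multilinear cotype estimate followed by an $L^1$-polarization argument producing the factor $m^m/m!$ --- is exactly the paper's. The differences lie in how the two halves are executed. For Step~1 the paper proves the same inequality (Lemma~\ref{lemma0}) by induction on $m$ via the continuous Minkowski inequality; your packaging through the fact that nested $\ell^q(X)$-spaces have cotype $q$ with the same constant $C_q(X)$ is the identical computation in different clothing, and is legitimate because the Kahane constant $K$ is universal over all Banach spaces. The genuine divergence is in Step~2. The paper uses the discrete $\pm 1$ polarization formula and establishes the averaging inequality $\int_{(\mathbb{T}^N)^m}\|P(\tfrac1m\sum z^{(i)})\|_X\,dz\le\int_{\mathbb{T}^N}\|P(z)\|_X\,dz$ by a double induction (Lemmas~\ref{lemma1} and~\ref{lemma2}): first two summands in one complex variable, using rotation invariance plus the monotonicity of $r\mapsto\int_0^{2\pi}\|h(re^{it})\|_X\,dt$ for holomorphic $h$, then an induction on $N$. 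You instead use the $\mathbb{T}^m$-integral polarization identity and then the iterated Poisson representation $\|P(w)\|_X\le\int_{\mathbb{T}^N}\|P(z)\|_X\prod_k P_{w_k}(z_k)\,dz$, exploiting that the coordinates of $W=\tfrac1m\sum z^{(i)}$ are independent and rotationally symmetric, so that $E[K_W(z)]=1$ by the mean value property of the Poisson kernel in its pole. This is a valid and arguably slicker one-shot argument (it identifies the pushforward of Haar measure under $\tfrac1m\sum$ as a Jensen-type measure for the origin); the paper's route is more elementary, needing only the one-variable boundary-monotonicity fact it quotes from Blasco--Xu. The only point worth recording explicitly in your Step~2 is that for $m\ge 2$ one has $|W_k|<1$ almost surely, so the Poisson representation applies a.s.\ (for $m=1$ the desired inequality is a trivial equality).
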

\noindent Before we give the proof let us note that  \cite[Theorem~3.2]{BoPGVi04} is an $m$-linear result that, combined with polarization gives (with the previous notation)
\[
 \Big(
\sum_{i_1,\ldots,i_m}
\big\| a_{i_1, \ldots,i_m} \big\|_X^q
\Big)^{1/q}
\leq C_q(X)^{m} \frac{m^m}{m!} \sup_{z \in \mathbb{D}^N} \Vert P(z) \Vert \,.
\]
Our result allows to replace (up to the constant $K$) the $\Vert \ \Vert_{\infty}$ norm with the smaller norm $\Vert \ \Vert_{1}$. We prepare the proof of Proposition~\ref{maintool} with three lemmas.

\begin{lemma} \label{lemma0}
Let $X$ be a Banach space of cotype $q$, $2 \leq q < \infty.$ Then for every  $m$-linear form
\[
T: \mathbb{C}^N \times \ldots \times  \mathbb{C}^N \rightarrow X,\,\,\,
 T\big(z^{(1)}, \ldots,z^{(m)}\big) = \sum_{i_1, \ldots,i_m=1}^N a_{i_1, \ldots,i_m} z^{(1)}_{i_1} \ldots z^{(m)}_{i_m}
\]
we have
\begin{align*}
\Big(\sum_{i_1, \ldots,i_m=1}^N \big\| a_{i_1, \ldots,i_m} \big\|_X^q\Big)^{1/q}
\leq \big(C_q(X)\,K \big)^m
 \int_{\mathbb{T}^\infty}  \ldots \int_{\mathbb{T}^\infty}
\big\|   T(z^{(1)}, \ldots,z^{(m)})     \big\|_X dz^{(1)} \ldots dz^{(m)}\,.
\end{align*}
\end{lemma}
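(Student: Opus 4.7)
The plan is to proceed by induction on $m$. The base case $m=1$ is immediate: given vectors $a_1,\ldots,a_N\in X$, the definition of cotype bounds $(\sum\|a_i\|_X^q)^{1/q}$ by $C_q(X)$ times the $L_2$-average over $\mathbb{T}^N$ of $\|\sum a_i z_i\|_X$, and Kahane's inequality converts the $L_2$-average into an $L_1$-average at the cost of a factor $K$. This yields the desired bound with constant $C_q(X)\,K$.

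For the inductive step, assuming the result for $(m-1)$-linear forms, I would single out the first variable of $T$ and introduce, for each fixed $z^{(1)} \in \mathbb{T}^N$, the $(m-1)$-linear form
\[
T_{z^{(1)}}(z^{(2)},\ldots,z^{(m)}) \;=\; \sum_{i_2,\ldots,i_m} c_{i_2,\ldots,i_m}(z^{(1)})\, z^{(2)}_{i_2}\cdots z^{(m)}_{i_m},
\]
whose coefficients are the first-coordinate slices $c_{i_2,\ldots,i_m}(z^{(1)}) = \sum_{i_1} a_{i_1,\ldots,i_m}\, z^{(1)}_{i_1}$. Applying the base case in the variable $i_1$ (for each fixed tuple $(i_2,\ldots,i_m)$) gives
\[
\Big(\sum_{i_1} \|a_{i_1,i_2,\ldots,i_m}\|_X^q\Big)^{1/q} \;\leq\; C_q(X)\,K \int_{\mathbb{T}^N} \|c_{i_2,\ldots,i_m}(z^{(1)})\|_X\, dz^{(1)}.
\]
Raising to the $q$-th power, summing over $(i_2,\ldots,i_m)$, taking the $q$-th root, and then pulling the $\ell_q$-norm inside the $z^{(1)}$-integral by Minkowski's integral inequality (valid since $q\ge 1$) reduces everything to estimating $\int_{\mathbb{T}^N}\big(\sum_{i_2,\ldots,i_m}\|c_{i_2,\ldots,i_m}(z^{(1)})\|_X^q\big)^{1/q}\,dz^{(1)}$. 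This is exactly where the induction hypothesis kicks in: for each fixed $z^{(1)}$, it bounds the inner $\ell_q$-norm by $(C_q(X)\,K)^{m-1}$ times the iterated $L_1$-average of $\|T(z^{(1)},\ldots,z^{(m)})\|_X$ over the remaining $m-1$ variables. Collecting constants and invoking Fubini produces the desired $(C_q(X)\,K)^m$ and the full $m$-fold integral (the passage from $\mathbb{T}^N$ to $\mathbb{T}^\infty$ is harmless since the integrand depends on finitely many coordinates).

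I do not expect any serious obstacle; the whole argument is a careful bookkeeping of the order of operations. The essential point is that each of the $m$ coordinates contributes exactly one factor of $C_q(X)\,K$ (one use of cotype, one use of Kahane), and that Minkowski's inequality in its $\ell_q$-valued form is the right tool to cleanly pass from the sum-then-integrate step to the integrate-then-apply-induction step. The constants thus multiply as claimed, and the $m$-fold integral of $\|T\|_X$ emerges automatically.
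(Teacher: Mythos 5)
Your proposal is correct and follows essentially the same route as the paper: induction on $m$, with the base case given by cotype plus Kahane, then one application of that base case to a single index, Minkowski's integral inequality to exchange the $\ell_q$-sum with the integral, and the induction hypothesis applied to the remaining $(m-1)$-linear slice. The only (immaterial) difference is that you peel off the first variable while the paper peels off the last, and you work with $q$-th roots throughout rather than with $q$-th powers.
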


\begin{proof}
We prove this result by induction on the degree $m$. For $m=1$ the result is an immediate consequence of the definition of cotype $q$ and Kahane's inequality. Assume that the result holds for $m-1$. By the continuous Minkowski inequality we then conclude that for every choice of finitely many vectors
$a_{i_1, \ldots,i_m} \in X$ with $1 \leq i_j \leq N, 1 \leq j \leq m$ we have
\begin{align*}
&
\sum_{i_1, \ldots,i_m} \big\| a_{i_1, \ldots,i_m} \big\|_X^q
=  \sum_{i_1, \ldots,i_{m-1}}\,\sum_{i_m}  \big\| a_{i_1, \ldots,i_{m}} \big\|_X^q
\\&
\leq
C_q(X)^q K^q
\Big(
\sum_{i_1, \ldots,i_{m-1}}
\Big(
\int_{\mathbb{T}^\infty}  \big\| \sum_{i_m} a_{i_1, \ldots,i_{m}} z^{(m)}_{i_m}\big\|_X dz^{(m)}
\Big)^q
\Big)^{q/q}
\\&
\leq
C_q(X)^q K^q
\Big(
\int_{\mathbb{T}^\infty}
 \Big(
 \sum_{i_1, \ldots,i_{m-1}}\big\| \sum_{i_m} a_{i_1, \ldots,i_{m}} z^{(m)}_{i_m}\big\|_X^q
\Big)^{1/q}
dz^{(m)}
\Big)^{q}
\\&
\leq
 C_q(X)^{qm} K^{qm}
\Big(
\int_{\mathbb{T}^\infty}
\underbrace{\int_{\mathbb{T}^\infty}  \ldots \int_{\mathbb{T}^\infty}}_{m-1}
 \big\|
\sum_{i_1, \ldots,i_{m-1}} a_{i_1, \ldots,i_{m-1}} z^{(1)}_{i_1}, \ldots,z^{(m-1)}_{i_{m-1}}
 \big\|_X
   dz^{(1)}\ldots dz^{(m-1)}  dz^{(m)}\Big)^{q}\,,
  \end{align*}
which is the conclusion.
\end{proof}

\noindent The following two lemmas are needed to produce a polynomial analog  of the preceding result.

\begin{lemma}\label{lemma1}
Let $X$ be a Banach space, and $f: \mathbb{C} \rightarrow X$ a holomorphic function. Then for $R_1, R_2,R \ge 0$
with  $R_1+ R_2 \le R$ we have
\[
\int_{\mathbb{T}}  \int_{\mathbb{T}}
\big\| f\big(R_1 z_1 + R_2z_2\big)   \big\|_X dz_1dz_2 \leq
\int_{\mathbb{T}}
\big\| f\big(Rz\big)\big\|_X dz\,.
\]
\end{lemma}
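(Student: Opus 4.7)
The plan is to reduce the two-variable integral on the left to the one-variable integral on the right by combining a measure-preserving rotation on $\mathbb{T}$ with the standard monotonicity of the circular means of $\|f\|_X$.

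First I would rescale by setting $g(z) := f(Rz)$ and $a := R_1/R$, $b := R_2/R$ (the case $R=0$ being trivial); then $a, b \geq 0$, $a + b \leq 1$, and the statement becomes
\[
\int_{\mathbb{T}} \int_{\mathbb{T}} \|g(a z_1 + b z_2)\|_X \, dz_1 \, dz_2 \;\leq\; \int_{\mathbb{T}} \|g(z)\|_X \, dz.
\]
Next, for fixed $z_1 \in \mathbb{T}$ I would apply the substitution $z_2 \mapsto z_1 z_2$ in the inner integral, using rotation invariance of Haar measure on $\mathbb{T}$. Since $a z_1 + b z_1 z_2 = z_1(a + b z_2)$, this rewrites the left-hand side as
\[
\int_{\mathbb{T}} \int_{\mathbb{T}} \|g(z_1(a + b z_2))\|_X \, dz_1 \, dz_2.
\]
For each fixed $z_2 \in \mathbb{T}$ the multiplier $w := a + b z_2$ satisfies $|w| \leq a + b \leq 1$, and a second application of rotation invariance reduces the $z_1$-integral to $\int_{\mathbb{T}} \|g(|w| z_1)\|_X \, dz_1$.

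Finally, I would invoke the classical fact that for any $X$-valued holomorphic function $g$ on $\mathbb{C}$ the scalar function $z \mapsto \|g(z)\|_X$ is subharmonic, since it is the supremum of the subharmonic functions $|x^* \circ g|$ over $x^* \in B_{X^*}$. Hence the radial means $r \mapsto \int_{\mathbb{T}} \|g(rz)\|_X \, dz$ are non-decreasing on $[0, \infty)$, so with $r = |w| \leq 1$ one obtains
\[
\int_{\mathbb{T}} \|g(|w| z_1)\|_X \, dz_1 \;\leq\; \int_{\mathbb{T}} \|g(z_1)\|_X \, dz_1
\]
for every $z_2 \in \mathbb{T}$; integrating in $z_2$ yields the claim. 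The only step warranting a word of justification is the subharmonicity of $\|g\|_X$, which is entirely standard, so no substantial obstacle is expected.
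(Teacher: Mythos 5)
Your proof is correct and follows essentially the same route as the paper's: rotation invariance of Haar measure on $\mathbb{T}$ is used to collapse the argument $R_1z_1+R_2z_2$ into a modulus $|R_1z_1+R_2|\le R$ times a free rotation, and then one invokes the monotonicity in $r$ of the circular means $\int_{\mathbb{T}}\|f(rRz)\|_X\,dz$ (which the paper quotes from Blasco--Xu and you justify via subharmonicity of $\|g\|_X$ -- the same underlying fact). The initial normalization $g(z)=f(Rz)$ is purely cosmetic.
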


\begin{proof}
By the rotation invariance of the normalized Lebesgue measure on $\mathbb{T}$ we get
\begin{multline*}
\int_{\mathbb{T}} \int_{\mathbb{T}}
\big\| f\big(R_1 z_1 + R_2z_2\big)   \big\|_X dz_1dz_2
=\int_{\mathbb{T}} \int_{\mathbb{T}}
\big\| f\big(R_1 z_1z_2 + R_2z_2\big)   \big\|_X dz_1dz_2
\\
=
\int_{\mathbb{T}} \int_{\mathbb{T}}
\big\| f\big(z_2(R_1 z_1 + R_2)\big)   \big\|_X dz_1dz_2
=
\int_{\mathbb{T}} \int_{\mathbb{T}}
\big\| f\big(z_2|R_1 z_1 + R_2|\big)   \big\|_X dz_2dz_1
\\
=
\int_{\mathbb{T}} \int_{\mathbb{T}}
\big\| f\big(z_2r(z_1)R\big)   \big\|_X dz_2dz_1
=\int_{0}^{2 \pi} \int_{0}^{2 \pi}
\big\| f\big(  r(e^{is}) R e^{it}\big)   \big\|_X \frac{dt}{2 \pi}\,\frac{ds}{2 \pi}\,.
\end{multline*}
where $r(z)= \frac{1}{R} | R_1 z + R_2 |$, $z \in \mathbb{T}$.
We know that for each holomorphic function
$h: \mathbb{C} \rightarrow X$ we have
\[
\int_{\mathbb{T}} \big\|h(z)   \big\|_X dz
=
\sup_{0 \leq r \leq 1} \int_0^{2\pi} \big\|h(r e^{it}) \big\|_X  \frac{dt}{2 \pi}
\]
(see e.g. Blasco and Xu \cite[p. 338]{BlXu91}).
Define now $h(z)= f\big( Rz \big)$, and note that $0 \leq r(z) \leq 1$ for all
$z \in \mathbb{T}$. Then
\begin{align*}
\int_{\mathbb{T}} \int_{\mathbb{T}}
\big\| f\big(R_1 z_1 + R_2z_2\big)   \big\|_X dz_1dz_2
&
=\int_{0}^{2 \pi} \int_{0}^{2 \pi}
\big\| h\big(r(e^{is}) e^{it} \big)   \big\|_X \frac{dt}{2 \pi}\,\frac{ds}{2 \pi}
\\&
\leq
\int_{0}^{2 \pi} \int_{\mathbb{T}}
\big\| h\big(z\big)   \big\|_X dz\,\frac{ds}{2 \pi}
=
 \int_{\mathbb{T}}
\big\| f\big(Rz\big)   \big\|_X dz\,.
\end{align*}
This completes the proof.
\end{proof}

\noindent A sort of iteration of the preceding result leads to the next

\begin{lemma} \label{lemma2}
Let $X$ be a Banach space, and $f: \mathbb{C}^N \rightarrow X$ a holomorphic function. Then for every $m$
\[
\int_{\mathbb{T}^N} \ldots  \int_{\mathbb{T}^N}
\big\| f\big(z^{(1)}+ \ldots + z^{(m)} \big)   \big\|_X dz^{(1)} \ldots dz^{(m)} \leq
\int_{\mathbb{T}^N}
\big\| f(mz)\big\|_X dz\,.
\]
\end{lemma}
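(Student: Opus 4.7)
My plan is to proceed by induction on $m$ and, within the induction step, to invoke Lemma~\ref{lemma1} one coordinate at a time via Fubini. The base case $m=1$ is an equality. For the induction step with $m\geq 2$, I fix $z^{(m)}\in\mathbb{T}^N$ and apply the induction hypothesis to the holomorphic map $w\mapsto f(w+z^{(m)})$ from $\mathbb{C}^N$ to $X$. This yields
\[
\int_{\mathbb{T}^N}\cdots\int_{\mathbb{T}^N} \big\| f(z^{(1)}+\cdots+z^{(m)})\big\|_X\, dz^{(1)}\cdots dz^{(m-1)} \leq \int_{\mathbb{T}^N} \big\| f((m-1)z+z^{(m)})\big\|_X\, dz.
\]
Integrating both sides over $z^{(m)}\in\mathbb{T}^N$ reduces the task to proving the two-term inequality
\[
\int_{\mathbb{T}^N}\int_{\mathbb{T}^N} \big\|f((m-1)z+z^{(m)})\big\|_X\, dz\, dz^{(m)} \leq \int_{\mathbb{T}^N} \big\|f(mw)\big\|_X\, dw,
\]
which is an $N$-dimensional analogue of Lemma~\ref{lemma1} with $R_1=m-1$, $R_2=1$, $R=m$ (so the hypothesis $R_1+R_2\leq R$ holds with equality).

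To establish this two-term inequality I apply Lemma~\ref{lemma1} coordinate by coordinate. Fixing the remaining $2(N-1)$ variables, the slice $w\mapsto f(w, z_2, \ldots, z_N)$ is holomorphic on $\mathbb{C}$, so Lemma~\ref{lemma1} trades the double average over $(z_1, z^{(m)}_1)\in\mathbb{T}^2$ for a single average of $\|f(mw_1, \ldots)\|_X$ over $w_1\in\mathbb{T}$. Iterating this substitution through the remaining $N-1$ coordinates, with Fubini justifying each interchange, transforms the left-hand side into $\int_{\mathbb{T}^N}\|f(mw)\|_X\, dw$, closing the induction.

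The main obstacle I anticipate is essentially bookkeeping: carrying out the coordinate-wise Fubini reduction while keeping the holomorphic slice structure intact in each coordinate, and verifying that the radii hypothesis of Lemma~\ref{lemma1} is met at each step with the pair $(m-1,1)$ against $m$. Once this routine reduction is set up, the induction on $m$ runs without further complication from Lemma~\ref{lemma1}.
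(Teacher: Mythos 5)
Your proof is correct, and it organizes the induction differently from the paper. The paper fixes $m$ and inducts on the dimension $N$: the base case $N=1$ is obtained by iterating Lemma~\ref{lemma1} $m-1$ times, absorbing one summand at a time with radii $(R_1,R_2,R)=(k,1,k+1)$, and the step from $N-1$ to $N$ first collapses the last coordinate via the $N=1$ case and then invokes the inductive hypothesis on the remaining $N-1$ coordinates. You instead induct on $m$, and the whole inductive step reduces to a single clean intermediate statement, namely the $N$-dimensional two-term inequality
\[
\int_{\mathbb{T}^N}\int_{\mathbb{T}^N}\big\|f\big((m-1)z+v\big)\big\|_X\,dz\,dv\le\int_{\mathbb{T}^N}\big\|f(mw)\big\|_X\,dw,
\]
which you then verify by applying the one-variable Lemma~\ref{lemma1} in each coordinate separately with $(R_1,R_2,R)=(m-1,1,m)$; all the interchanges of integration order are legitimate by Tonelli since the integrand is nonnegative and continuous. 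Both arguments ultimately apply Lemma~\ref{lemma1} roughly $N(m-1)$ times and rest on the same two ingredients (the one-dimensional lemma plus Fubini), so neither is shorter than the other; what your version buys is that the only multidimensional statement needed is the explicitly isolated two-term inequality, which makes the bookkeeping somewhat more transparent than the paper's nested double induction.
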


\begin{proof} We fix some $m$, and  do induction with respect to $N$. For $N=1$ we obtain from Lemma \ref{lemma1} that
\begin{align*}
&
\underbrace{\int_{\mathbb{T}} \ldots  \int_{\mathbb{T}}}_{m-2}\int_{\mathbb{T}}\int_{\mathbb{T}}
\big\|  \underbrace{f
\big(
z^{(1)}+ \ldots + z^{(m-2)}+ z^{(m-1)}+ z^{(m)}
\big)}
_{=: g_{z^{(1)}, \ldots, z^{(m-2)}} (z^{(m-1)}+ z^{(m)})}
\big\|_X
dz^{(m-1)}dz^{(m)} dz^{(1)}\ldots dz^{(m-2)}
\\&
\leq
\underbrace{\int_{\mathbb{T}} \ldots  \int_{\mathbb{T}}}_{m-2}\int_{\mathbb{T}}
\big\|
g_{z^{(1)}, \ldots, z^{(m-2)}} (2w)
\big\|_X
dw \,\,dz^{(1)}\ldots dz^{(m-2)}
\\&
=
\underbrace{\int_{\mathbb{T}} \ldots  \int_{\mathbb{T}}}_{m-3} \int_{\mathbb{T}}\int_{\mathbb{T}}
\big\|
f\big(z^{(1)}+ \ldots + z^{(m-2)}+ 2w\big)
\big\|_X
dw  dz^{(m-2)} dz^{(1)}\ldots dz^{(m-3)}
\\&
\leq
\underbrace{\int_{\mathbb{T}} \ldots  \int_{\mathbb{T}}}_{m-3} \int_{\mathbb{T}}
\big\|
f\big(z^{(1)}+ \ldots + z^{(m-3)}+ 3w\big)
\big\|_X
dz^{(1)}\ldots dz^{(m-3)}\,\,dw
\\&
\leq \ldots \leq \int_{\mathbb{T}}
\big\| f(mz)\big\|_X dz\,.
\end{align*}
We now assume that the conclusion holds for $N-1$ and write each $z\in \mathbb{T}^N$ as $z=(u,w)$, with $u\in \mathbb{T}^{N-1}$ and $w\in \mathbb{T}$. Then, using the case $N=1$ in the first inequality and the inductive hypothesis in the second, we have
\begin{align*}
&
\int_{\mathbb{T}^N} \ldots  \int_{\mathbb{T}^{N}}
\big\| f\big(z^{(1)}+ \ldots + z^{(m)} \big)   \big\|_X dz^{(1)} \ldots dz^{(m)}
\\&
=\int_{\mathbb{T}^{N-1}} \ldots  \int_{\mathbb{T}^{N-1}}
\Big(
\int_{\mathbb{T}} \ldots  \int_{\mathbb{T}}
\big\| f\big((u^{(1)},w_1)+ \ldots + (u^{(m)},w_m) \big)   \big\|_X
dw_1 \ldots dw_N
\Big)\, du^{(1)} \ldots du^{(m)}
\\&
\le
\int_{\mathbb{T}^{N-1}} \ldots  \int_{\mathbb{T}^{N-1}}
\Big(
\int_{\mathbb{T}}
\big\| f\big((u^{(1)},mw)+ \ldots + (u^{(m)},mw) \big)   \big\|_X
dw
\Big)\, du^{(1)} \ldots du^{(m)}
\\&
=
\int_{\mathbb{T}}
\Big(
\int_{\mathbb{T}^{N-1}} \ldots  \int_{\mathbb{T}^{N-1}}
\big\| f\big((u^{(1)},mw)+ \ldots + (u^{(m)},mw) \big)   \big\|_X
 du^{(1)} \ldots du^{(m)}
\Big)\,dw
\\&
\leq
\int_{\mathbb{T}}
\Big(
\int_{\mathbb{T}^{N-1}}
\big\| f\big((mu,mw)+ \ldots + (mu,mw) \big)   \big\|_X
 du
\Big)\,dw
\\&
=
\int_{\mathbb{T}^N}
\big\| f(mz)\big\|_X dz\,,
\end{align*}
as desired.
\end{proof}

\vspace{3mm}

\noindent We are now ready to give the {\it proof of the inequality from Proposition \ref{maintool}}. By the polarization formula  we know that for every choice of $z_1^{(1)}, \ldots, z_m^{(m)} \in \mathbb{T}^N$
we have
\[
T\big(z^{(1)}, \ldots,z^{(m)}\big)
=
\frac{1}{2^mm!}\sum_{\varepsilon_i = \pm 1} \varepsilon_i \ldots \varepsilon_m
P\Big( \sum_{i=1}^N \varepsilon_i z^{(i)} \Big)
\]
(see e.g \cite{Di99} or
\cite{Fl99}). Hence we deduce from Lemma \ref{lemma2}
\begin{align*}
\int_{\mathbb{T}^N} \ldots \int_{\mathbb{T}^N} \big\| & T\big(z^{(1)}, \ldots,z^{(m)}\big)   \big\|_X
dz^{(1)} \ldots dz^{(m)}
\\&
\leq
\frac{1}{2^mm!}\sum_{\varepsilon_i = \pm 1}
\int_{\mathbb{T}^N} \ldots \int_{\mathbb{T}^N}
\Big\| P\Big( \sum_{i=1}^N \varepsilon_i z^{(i)} \Big)  \Big\|_X
dz^{(1)} \ldots dz^{(m)}
\\&
=
\frac{1}{2^mm!}\sum_{\varepsilon_i = \pm 1}
\int_{\mathbb{T}^N} \ldots \int_{\mathbb{T}^N}
\Big\| P\Big( \sum_{i=1}^N z^{(i)} \Big)  \Big\|_X
dz^{(1)} \ldots dz^{(m)}
\\&
=
\frac{1}{m!}
\int_{\mathbb{T}^N} \ldots \int_{\mathbb{T}^N}
\Big\| P\Big( \sum_{i=1}^N z^{(i)} \Big)  \Big\|_X
dz^{(1)} \ldots dz^{(m)}
\\&
\leq
\frac{1}{m!}
\int_{\mathbb{T}^N}
\big\| P\big( mz\big)  \big\|_X
dz
=
\frac{m^m}{m!}
\int_{\mathbb{T}^N}
\big\| P\big( z\big)  \big\|_X
dz\,.
\end{align*}
Then by Lemma \ref{lemma0} we obtain
\begin{align*}
\Big(\sum_{i_1, \ldots,i_m}^N \big\| a_{i_1, \ldots,i_m} \big\|_X^q\Big)^{1/q}
&
\leq \big(C_q(X)K\big)^m
 \int_{\mathbb{T}^\infty}  \ldots \int_{\mathbb{T}^\infty}
\big\|   T(z^{(1)}, \ldots,z^{(m)})     \big\|_X dz^{(1)} \ldots dz^{(m)}
\\&
=
\big(C_q(X)K\big)^m
\frac{m^m}{m!}
\int_{\mathbb{T}^N}
\big\| P\big( z\big)  \big\|_X
dz\,,
\end{align*}
which completes the proof of Proposition \ref{maintool}. $\Box$

\vspace{3mm}

A second proposition is needed which allows to reduce  the proof of our main result \ref{MAIN} to the
homogeneous case. It is
a vector-valued version of a result of \cite[Theorem 9.2]{CoGa86} with a similar proof (here only given for the sake of completeness).

\begin{proposition} \label{lemma3}
There is a contractive projection
\[
\Phi_m: H_p(\mathbb{T}^N, X) \rightarrow H_p(\mathbb{T}^N, X)\,,\,\,\, f \mapsto f_m,
\]
such for all $f \in H_p(\mathbb{T}^N, X)$
\begin{equation} \label{alpha}
\hat{f}(\alpha) = \hat{f}_m(\alpha) \,\, \text{ for all  } \,\, \alpha \in \mathbb{N}_0^N
\,\, \text{ with  } \,\, |\alpha|=m\,.
\end{equation}
\end{proposition}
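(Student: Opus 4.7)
My plan is to realize $\Phi_m$ as an averaging operator with respect to the circle action of $\mathbb{T}$ on $\mathbb{T}^N$ by diagonal rotation. For $\lambda \in \mathbb{T}$ and $z = (z_1, \ldots, z_N) \in \mathbb{T}^N$ write $\lambda \cdot z = (\lambda z_1, \ldots, \lambda z_N)$, and define
$$
\Phi_m(f)(z) \;:=\; f_m(z) \;:=\; \int_{\mathbb{T}} \bar{\lambda}^{\,m}\, f(\lambda \cdot z)\, d\lambda,
$$
understood as a Bochner integral in $L_p(\mathbb{T}^N, X)$. Since the transformation $z \mapsto \lambda\cdot z$ preserves the normalized Lebesgue measure on $\mathbb{T}^N$, one has $\|\bar{\lambda}^m f(\lambda\,\cdot\,)\|_{L_p(\mathbb{T}^N,X)} = \|f\|_{L_p(\mathbb{T}^N,X)}$ for every $\lambda$; together with the strong continuity of $\lambda \mapsto f(\lambda \cdot)$ on $L_p(\mathbb{T}^N,X)$ this makes the Bochner integral well-defined.

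Next I compute Fourier coefficients. For every $\alpha \in \mathbb{Z}^N$, the change of variables $z \mapsto \lambda\cdot z$ yields $\widehat{f(\lambda\,\cdot\,)}(\alpha) = \lambda^{|\alpha|}\hat{f}(\alpha)$, where $|\alpha| = \alpha_1 + \cdots + \alpha_N$ is allowed to be any integer. Interchanging the Bochner integral with the Fourier coefficient functional gives
$$
\hat{f}_m(\alpha) \;=\; \Big(\int_{\mathbb{T}} \bar{\lambda}^{\,m}\, \lambda^{|\alpha|}\, d\lambda \Big)\hat{f}(\alpha)
\;=\; \begin{cases} \hat{f}(\alpha) & \text{if } |\alpha| = m,\\ 0 & \text{otherwise.}\end{cases}
$$
In particular, since $\hat{f}(\alpha) = 0$ for $\alpha \in \mathbb{Z}^N \setminus \mathbb{N}_0^N$, also $\hat{f}_m(\alpha) = 0$ there, so $f_m$ lies in $H_p(\mathbb{T}^N, X)$ and equation \eqref{alpha} is satisfied (with the bonus that $\hat{f}_m$ is supported on $\{|\alpha|=m\}$).

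Contractivity is then immediate from Minkowski's integral inequality:
$$
\|f_m\|_{L_p(\mathbb{T}^N,X)} \;\le\; \int_{\mathbb{T}} \|\bar{\lambda}^{\,m} f(\lambda\,\cdot\,)\|_{L_p(\mathbb{T}^N,X)}\, d\lambda \;=\; \|f\|_{L_p(\mathbb{T}^N,X)}.
$$
Finally, to verify that $\Phi_m$ is a projection, apply the Fourier formula above to $f_m$ in place of $f$: since $\hat{f}_m$ is already supported on $\{|\alpha|=m\}$, one obtains $\widehat{\Phi_m(f_m)}(\alpha) = \hat{f}_m(\alpha)$ for every $\alpha$, whence $\Phi_m(f_m) = f_m$ by uniqueness of Fourier coefficients of $L_1$-functions on $\mathbb{T}^N$.

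The only technical point that needs care is the exchange of the Bochner integral with the Fourier coefficient functional and, behind it, the strong measurability of $\lambda \mapsto f(\lambda \cdot)$ as an $L_p(\mathbb{T}^N,X)$-valued map; both are routine consequences of density of trigonometric polynomials and the isometric action of $\mathbb{T}$ on $L_p(\mathbb{T}^N, X)$. Beyond that, the argument is essentially the standard rotation-averaging recipe from Cole--Gamelin, and it works verbatim in the $X$-valued setting because no scalar-specific feature of $L_p(\mathbb{T}^N)$ is used.
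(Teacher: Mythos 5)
Your proof is correct and follows essentially the same route as the paper: both realize $f_m$ by averaging $f$ against the character $w^{-m}$ along the diagonal rotation action (the paper writes $f_m(z)=\int_{\mathbb{T}}f(zw)w^{-m}\,dw$) and derive contractivity from Minkowski's integral inequality together with rotation invariance of the measure. The only difference is packaging: the paper defines the operator on the dense subspace of polynomials and extends by continuity via Fej\'er's theorem, whereas you define it directly as a Bochner integral on all of $H_p(\mathbb{T}^N,X)$, at the cost of the (routine) strong measurability/continuity of $\lambda\mapsto f(\lambda\,\cdot\,)$ that you correctly flag.
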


\begin{proof}
Let $\mathcal{P}(\mathbb{C}^N,X) \subset H_p(\mathbb{T}^N, X)$ be the  subspace all finite polynomials
$f= \sum_{\alpha \in \Lambda} c_\alpha z^\alpha$; here $\Lambda$ is a finite set of multi indices in $\mathbb{N}_0^N$ and the coefficients $c_\alpha \in X$. Define the linear projection $\Phi^0_m$ on
$\mathcal{P}(\mathbb{C}^N,X)$ by
\begin{equation*}
\Phi^0_m(f)(z)=f_m(z)= \sum_{\alpha \in \Lambda, |\alpha|=m} \hat{f}(\alpha)z^\alpha\,;
\end{equation*}
clearly, we have \eqref{alpha}.
In order to show that $\Phi^0_m$ is a contraction on $\big(\mathcal{P}(\mathbb{C}^N,X), \|\cdot\|_p\big)$
fix some function $f \in \mathcal{P}(\mathbb{C}^N,X)$ and $z \in \mathbb{T}^N$, and define
\[
f(z \cdot): \mathbb{T} \rightarrow X\,, \,\,\, w\mapsto f(z w)\,.
\]
Clearly, we have
\[
f(z w) = \sum_{k} f_k(z)w^k\,,
\]
and hence
\[
f_m(z) = \int_{\mathbb{T}} f(zw) w^{-m} dw\,.
\]
Integration,  the continuous Minkowski inequality and the rotation invariance of the normalized Lebesgue measure on $\mathbb{T}^N$
give
\begin{multline*}
\int_{\mathbb{T}^N} \big\| f_m(z) \big\|_X^p dz
=\int_{\mathbb{T}^N} \big\|\int_{\mathbb{T}} f(zw) w^{-m} dw  \big\|_X^p dz\\
\le\int_{\mathbb{T}^N} \Big( \int_{\mathbb{T}} \big\| f(zw) \big\|_X dw\Big)^p dz
\le\int_{\mathbb{T}} \int_{\mathbb{T}^N} \big\| f(zw) \big\|_X^p dz dw
=\int_{\mathbb{T}^N} \big\| f(z) \big\|_X^p dz\,,
\end{multline*}
which proves that   $\Phi^0_m$ is a contraction on $(\mathcal{P}(\mathbb{C}^N,X), \|\cdot\|_p)$. By Fejer's theorem (vector-valued) we know that $\mathcal{P}(\mathbb{C}^N,X)$ is a dense subspace of
$ H_p(\mathbb{T}^N, X)$. Hence $\Phi^0_m$ extends to a contractive projection $\Phi_m$ on $ H_p(\mathbb{T}^N, X)$.
This extension $\Phi_m$ still satisfies \eqref{alpha} since for each multi index $\alpha $ the mapping
$ H_p(\mathbb{T}^N, X) \rightarrow X, \,\,\, f \mapsto \hat{f}(\alpha)$ is continuous.
\end{proof}

\section{Proof of the main result }
\noindent We are now ready to prove Theorem \ref{MAIN}.
Let $1 \leq p < \infty$, and recall from \eqref{MathAnn}  that
\[
 1- \dfrac{1}{\ct(X)}=  S_\infty(X) \leq S_p(X) \,;
\]
see Remark \ref{dani} for a direct argument. Hence it suffices to concentrate on the upper estimate in Theorem \ref{MAIN}: Since we obviously have $  S_p(X) \leq  S_1(X)$, we are going to prove that
\begin{equation}\label{eqS1}
S_1(X) \leq  1- \dfrac{1}{\ct(X)}\,.
\end{equation}

Suppose first that $X$ has no finite cotype.
For $D=\sum_n a_n n^{-s}  \in \mathcal{H}_1(X)$ we take
 $f \in H_1(\mathbb{T}^\infty, X)$ with $D=  \mathfrak{B}_Xf$. Note that
\[ | \hat{f}(\alpha) |\le \int_{\mathbb{T}^{\infty}} |f(w) w^{- \alpha}| dw=
\|f\|_{L_{1} (\mathbb{T}^{\infty}, X)}<\infty \,
 \]
and, by the definition of $\mathfrak{B}_X$, the coefficients of $D$
are also bounded by $\|f\|_{L_{1} (\mathbb{T}^{\infty}, X)}$. As a consequence,
\[
\sum_{n=1}^\infty \|a_n\|_X \frac{1}{n^s} \le \sum_{n=1}^\infty
\|f\|_{L_{1} (\mathbb{T}^{\infty}, X)} \frac{1}{n^s}<\infty
\]whenever $\re s > 1$. This means that $S_1(X)\le 1$ and gives \eqref{eqS1} for $\ct(X)=\infty$.

Now if  $X$ has finite cotype, take $q > \ct(X)$ and $\varepsilon > 0$, and put
$s= \big(1-\frac{1}{q} \big) \big(1+ 2\varepsilon \big)$. Choose an integer  $k_0$ such $p_{k_0}^{\varepsilon / q'} > eC_q(X)  K \sum_{j=1}^\infty \frac{1}{p_{j}^{1+ \varepsilon}}$, and define
\[
\tilde{p} = (\underbrace{p_{k_0}, \ldots,p_{k_0}}_{k_0\,\,\, \text{times}}, p_{k_0+1}, p_{k_0+2}, \ldots ).
\]
We are going to show that there is a constant $C(q,X,\varepsilon)>0$ such that for every $f \in H_1(\mathbb{T}^\infty,X)$ we have
\begin{align} \label{infy}
\sum_{\alpha \in \mathbb{N}_0^{(\mathbb{N})}} \|\hat{f}(\alpha) \|_X \frac{1}{\tilde{p}^{s \alpha}} \leq C(q,X,\varepsilon)\|f\|_{H_1(\mathbb{T}^\infty,X)}.
\end{align}
This finishes the argument: By Remark \ref{remark} the sequence $1/p^s \in \mon H_1(\mathbb{T}^\infty, X)$. But in view of Bohr's transform from \eqref{vision},
this means that for every Dirichlet series $D=\sum_n a_n n^{-s} =  \mathfrak{B}_Xf \in \mathcal{H}_1(X)$ with $f \in H_1(\mathbb{T}^\infty, X)$ we have
\[
\sum_{n=1}^\infty \|a_n\|_X \frac{1}{n^s} = \sum_{\alpha \in \mathbb{N}_0^{(\mathbb{N})}} \|\hat{f}(\alpha) \|_X \frac{1}{p^{s \alpha}} < \infty\,.
\]
Therefore $\sigma_a(D) \le \big(1-\frac{1}{q} \big) \big(1+ 2\varepsilon \big)$  for each such $D$ which, since $\varepsilon > 0$
was arbitrary, is what we wanted to prove.

It remains to check  \eqref{infy}; the idea is to show first that \eqref{infy} holds for all
$X$-valued $H_1$-functions which only depend on $N$ variables: There is a constant $C(q,X,\varepsilon)>0$ such that for
 all $N$ and every $f \in H_1(\mathbb{T}^N,X)$ we have
\begin{align} \label{finy}
\sum_{\alpha \in \mathbb{N}_0^N}\|\hat{f}(\alpha) \|_X \frac{1}{\tilde{p}^{s \alpha}} \leq C(q,X,\varepsilon)\|f\|_{H_1(\mathbb{T}^N,X)}.
\end{align}
In order to understand that \eqref{finy} implies \eqref{infy} (and hence the conclusion), assume that \eqref{finy} holds and take some
$f \in H_1(\mathbb{T}^\infty,X)$. Given an arbitrary $N$, define
\begin{align*}
f_N: \mathbb{T}^N \rightarrow X,\,\, f_N(w) = \int_{\mathbb{T}^\infty} f(w, \tilde{w}) d\tilde{w}.
\end{align*}
Then it can be easily shown that $f_N \in L_1(\mathbb{T}^N,X)$, $\|f_N\|_1 \le \|f\|_1$, and
$\hat{f_N}(\alpha)=\hat{f}(\alpha)$ for all $\alpha \in \mathbb{Z}^N$. If we now apply \eqref{finy} to this $f_N$,
we get \begin{align*}
\sum_{\alpha \in \mathbb{N}_0^N}\|\hat{f}(\alpha) \|_X \frac{1}{\tilde{p}^{s \alpha}} \leq C(q,X,\varepsilon)\|f\|_{H_1(\mathbb{T}^\infty,X)}\,,
\end{align*}
which, after taking the supremum over all possible $N$ on the left side, leads to \eqref{infy}.

We turn to the proof of  \eqref{finy}, and  here in a first step will show the following: For
every $N$, every $m$-homogeneous polynomial $P: \mathbb{C}^N \rightarrow X$ and every $u \in \ell_{q'}$
we have
\begin{align} \label{poly}
\sum_{\substack{ \alpha \in  \mathbb{N}_0^N \\ |\alpha|=m}} \|\hat{P}(\alpha) u^\alpha\|_X
\leq \big(eC_q(X)K \big)^m  \int_{\mathbb{T}^N} \big\|P(z)  \big\|_X dz \,\, \Big( \sum_{j=1}^\infty |u_j|^{q'} \Big)^{m/q'}\,.
\end{align}
Indeed, take such a polynomial $P(z)=\sum_{\alpha \in  \mathbb{N}_0^N,|\alpha|=m} \hat{P}(\alpha) z^\alpha, \, z \in \mathbb{T}^N$, and look at its unique $m$-linear symmetrization
\[
T: \mathbb{C}^N \times \ldots \times  \mathbb{C}^N \rightarrow X,\,\,\,
 T(z^{(1)}, \ldots,z^{(m)}) = \sum_{i_1, \ldots,i_m=1}^N a_{i_1, \ldots,i_m} z^{(1)}_{i_1}, \ldots,z^{(m)}_{i_m}\,.
\]
Then we know from Proposition  \ref{maintool} that
\begin{align*}
 \Big(\sum_{i_1, \ldots,i_m=1}^N \big\|a_{i_1, \ldots,i_m}\big\|_X^q \Big)^{1/q}
\leq
\big(eC_q(X) K\big)^{m}   \int_{\mathbb{T}^N} \big\| P(z) \big\|_X dz\,.
\end{align*}
Hence  \eqref{poly} follows by H\"older's inequality:
\begin{multline*}
\sum_{\alpha \in  \mathbb{N}_0^N,|\alpha|=m} \big\| \hat{P}(\alpha) u^\alpha \big\|_X
=\sum_{i_1, \ldots,i_m=1}^N \big\|a_{i_1, \ldots,i_m}\big\|_X
|u_{i_1} \ldots u_{i_N}|
\\
\leq
\big(eC_q(X) K\big)^{m}   \int_{\mathbb{T}^N} \big\| P(z) \big\|_X dz
\,\, \Big( \sum_{j=1}^\infty |u_j|^{q'} \Big)^{m/q'}\,.
\end{multline*}
We finally give the proof of \eqref{finy}: Take $f \in H_1(\mathbb{T}^N, X)$, and recall from Proposition \ref{lemma3}
that for each integer $m$ there is an $m$-homogeneous polynomial $P_m: \mathbb{C}^N \rightarrow X$ such that
$\|P_m\|_{H_1(\mathbb{T}^N, X)} \leq  \|f\|_{H_1(\mathbb{T}^N, X)}$ and $\hat{P}_m(\alpha)=\hat{f}(\alpha)$
for all $\alpha \in  \mathbb{N}_0^N$ with $|\alpha|=m$. Finally, from \eqref{poly}, the definition of $s$, and the fact that  $\max\{p_{k_0},p_j\} \leq \tilde{p}_j$ for all $j$ we conclude that
\begin{align*} \label{finally}
\sum_{\alpha \in \mathbb{N}_0^N}\|\hat{f}(\alpha) \|_X \frac{1}{\tilde{p}^{s \alpha}}
&
=\sum_{m=1}^\infty\sum_{\alpha \in \mathbb{N}_0^N, |\alpha|=m}\|\hat{P}_m(\alpha) \|_X \frac{1}{\tilde{p}^{s \alpha}}
 \\[1ex]&
\leq \sum_{m=1}^\infty \big(eC_q(X) K\big)^{m}\big\|P_m\big\|_{H_1(\mathbb{T}^N,X)}\,\, \Big( \sum_{j=1}^\infty \frac{1}{\tilde{p}_j^{sq'}} \Big)^{m/q'}
 \\[1ex]&
= \sum_{m=1}^\infty \big(eC_q(X) K\big)^{m}\big\|f\big\|_{H_1(\mathbb{T}^N,X)}\,\, \Big( \sum_{j=1}^\infty \frac{1}{\tilde{p}_j^{1+2\varepsilon}} \Big)^{m/q'}
\\[1ex]&
= \sum_{m=1}^\infty  \big(eC_q(X) K\big)^{m}\big\|f\big\|_{H_1(\mathbb{T}^N,X)}\,\,
\Big( \sum_{j=1}^\infty \frac{1}{\tilde{p}_j^{1+\varepsilon}}
\frac{1}{\tilde{p}_j^{\varepsilon}}
\Big)^{m/q'}
\\[1ex]&
\le
\big\|f\big\|_{H_1(\mathbb{T}^N,X)}
\sum_{m=1}^\infty
\Bigg(
\underbrace{\frac{eC_q(X) K   \Big( \sum_{j=1}^\infty \frac{1}{p_j^{1+\varepsilon}} \Big)^{1/1+\varepsilon}       }{p_{k_0}^{\varepsilon/q'}}}_{\le 1}
\Bigg)^{m}\,.
\end{align*}
 This completes the proof of Theorem \ref{MAIN}.\,\,\,\,$\Box$

\begin{remark} \label{dani}
We end this note with a direct proof of the fact \begin{equation}
\label{ineq} 1- \dfrac{1}{\ct(X)}\leq S_p(X) \,,\,\ 1 \leq p < \infty
\end{equation}
in which we do not use the inequality
 \begin{equation}
\label{tt} 1- \dfrac{1}{\ct(X)}\leq S_\infty(X)
\end{equation}
from \cite{DeGaMaPG08}
(here repeated in \eqref{MathAnn}). The proof of \eqref{tt} given in \cite{DeGaMaPG08} in a first step shows that $1- 1/\Pi(X) \le S_\infty(X)$ where $$\Pi(X) = \inf \big\{   r \ge 2 \,|\, \mathrm{id}_X
\, \text{ is  }\,  (r,1)-\text{summing} \big\},$$ and then, in a second step,  applies a fundamental theorem of
Maurey and Pisier stating that $\Pi(X)= \ct(X)$.

The following argument for \eqref{ineq} is very similar to the orginal one from  \cite{DeGaMaPG08} but does not use  the Maurey-Pisier theorem (since we here  consider $\mathcal{H}_p(X), 1 \le p < \infty$ instead of $\mathcal{H}_\infty(X)$):
  By the proof of Corollary~\ref{Bohri}, inequality \eqref{ineq} is equivalent to \begin{equation*}\label{ineq2}M_p(X)\le \frac{\ct(X)}{\ct(X)-1}\,.\end{equation*}
Take $r< M_p(X)$, so that $\ell_r \cap B_{c_0} \subset  \mon H_{p}(\mathbb{T}^{\infty},X)$.
Let $H_p^1(\mathbb{T}^\infty, X)$ be the subspace of $H_{p}(\mathbb{T}^{\infty},X)$ formed by all 1-homogeneous polynomials (i.e., linear operators). We can define a bilinear operator $\ell_r\times H_p^1(\mathbb{T}^\infty, X) \to \ell_1(X)$ by $(z,f)\mapsto (z_jf(e_j))_j$ which, by a closed graph argument, is continuous. Therefore, there is a constant $M$ such that
for all $z \in \ell_r$ and all $f \in H_p^1(\mathbb{T}^\infty, X)$ we have
$$\sum_j|z_j|\|f(e_j)\|_X\le M \|z\|_{\ell_r} \|f\|_{H_{p}(\mathbb{T}^{\infty},X)}.$$ Taking the supremum over all $z\in B_{\ell_r}$ we obtain
for all $f \in H_p^1(\mathbb{T}^\infty, X)$
$$\big(\sum_j\|f(e_j)\|_X^{r'}\big)^{1/r'}\le M  \|f\|_{H_{p}(\mathbb{T}^{\infty},X)}.$$
Now, take $x_1,\dots,x_N\in X$ and define $f\in H_p^1(\mathbb{T}^\infty, X)$  by $f(e_j)=x_j$ if $1\le j\le N$, $f(e_j)=0$ if $j>N$ and extend it by linearity. By the previous inequality and Lemma~\ref{lemma3} we have
$$\big(\sum_{j=1}^N\|x_j\|_X^{r'}\big)^{1/r'}\le M  \Big( \int_{\mathbb{T}^N} \Big\| \sum_{j=1}^N  x_j z_j  \Big\|_X^{r'} dz \Big)^{1/r'}\,.$$By Kahane's inequality, $X$ has cotype $r'$, which means that $r'>\ct(X)$ or, equivalently, $r<\frac{\ct(X)}{\ct(X)-1}$. Since $r< M_p(X)$ was arbitrary, we obtain \eqref{ineq}.

\end{remark}

\bibliographystyle{abbrv}

\begin{thebibliography}{10}


\bibitem{Ba02}
F.~Bayart.
\newblock Hardy spaces of {D}irichlet series and their composition operators.
\newblock {\em Monatsh. Math.}, 136(3):203--236, 2002.

\bibitem{BlXu91}
O.~Blasco and Q.~Xu.
\newblock Interpolation between vector-valued Hardy spaces.
\newblock {\em J. Funct. Anal.},  102:331--359, 1991.

\bibitem{BoHi31}
H.~F. Bohnenblust and E.~Hille.
\newblock On the absolute convergence of {D}irichlet series.
\newblock {\em Ann. of Math. (2)}, 32(3):600--622, 1931.

\bibitem{Bo13_Goett}
H.~Bohr.
\newblock \"{U}ber die {B}edeutung der {P}otenzreihen unendlich vieler
  {V}ariabeln in der {T}heorie der \textit{{D}irichlet}--schen {R}eihen
  $\sum\,\frac{a_n}{n^s}$.
\newblock {\em Nachr. Ges. Wiss. G\"{o}ttingen, Math. Phys. Kl.}, pages
  441--488, 1913.

  \bibitem{Bo13}
H.~Bohr.
\newblock \"{U}ber die gleichm\"{a}{\ss}ige {K}onvergenz {D}irichletscher
  {R}eihen.
\newblock {\em J. Reine Angew. Math.}, 143:203--211, 1913.



\bibitem{CoGa86}
B.~J. Cole and T.~W. Gamelin.
\newblock Representing measures and {H}ardy spaces for the infinite polydisk
  algebra.
\newblock {\em Proc. London Math. Soc. (3)}, 53(1):112--142, 1986.


\bibitem{DeFrMaSe}
A.~Defant, L.~Frerick, P. ~Sevilla Peris, and M.~Maestre.
\newblock Monomial series expansions of $H_p$-functions in infinetely many variables.
\newblock {\em preprint}, 2013.



\bibitem{DeGaMaPG08}
A.~Defant, D.~Garc{\'i}a, M.~Maestre, and D.~P\'{e}rez-Garc{\'i}a.
\newblock Bohr's strip for vector-valued {D}irichlet series.
\newblock {\em Math. Ann.}, 342(3):533--555, 2008.

\bibitem{DeMaPr09}
A.~Defant, M.~Maestre, and C.~Prengel.
\newblock Domains of convergence for monomial expansions of holomorphic
  functions in infinitely many variables.
\newblock {\em J. Reine Angew. Math.}, 634:13--49, 2009.

\bibitem{DeSe11}
A.~Defant and P. ~Sevilla Peris.
\newblock Convergence of monomial series expansions of holomorphic
functions in Banach spaces.
\newblock {\em Quart. J. Math.},  63(3): 569--584, 2012.


\bibitem{Di99}
S.~Dineen.
\newblock {\em Complex analysis on infinite-dimensional spaces}.
\newblock Springer Monographs in Mathematics. Springer-Verlag London Ltd.,
  London, 1999.

\bibitem{Fl99}
K.~Floret.
\newblock Natural norms on symmetric tensor products of normed spaces.
\newblock {\em Note Mat.}, 17:153--188 (1999), 1997.

\bibitem{He05}
H.~Helson.
\newblock {\em Dirichlet series.}
\newblock { Berkeley, CA}, ISBN: 0-9655211-6-8, 2005.



\bibitem{LindenstraussTzafriri}
J.~Lindenstrauss and L.~Tzafriri, L.
\newblock {\em Classical Banach Spaces I and II.}
\newblock {Springer}, 1996.

\end{thebibliography}
%

\noindent dcarando@dm.uba.ar \\  defant@mathematik.uni-oldenburg.de \\  psevilla@mat.upv.es

\end{document}